\providecommand{\keywords}[1]{\textbf{\textit{Keywords: }} #1}
\theoremstyle{plain}
\newtheorem{satz}{Theorem}\numberwithin{satz}{section} %[section]
\newtheorem{lemma}[satz]{Lemma}%\numberwithin{satz}{section} 
\newtheorem{prop}[satz]{Proposition}%\numberwithin{satz}{section} 
\newtheorem{kor}[satz]{Corollary}%\numberwithin{satz}{section} 
\theoremstyle{definition}
\newtheorem{defi}{Definition}\numberwithin{defi}{section} 
\theoremstyle{remark}
\numberwithin{conj}{section}
\newtheorem{remark}{Remark}\numberwithin{remark}{section}
\newcommand{\nn}{{\mathbb{N}}}   % natuerliche Zahlen
\newcommand{\qq}{{\mathbb{Q}}}   % rationale Zahlen
\newcommand{\zz}{{\mathbb{Z}}}   % ganze Zahlen
\begin{document}
\title{Non-parametricity of rational translates of regular Galois extensions}
\author{Joachim K\"onig\thanks{Technion I.I.T. 32000 Haifa, Israel.
 email: koenig.joach@tx.technion.ac.il}}
\maketitle
\begin{abstract}
We generalize a result of F.\ Legrand about the existence of non-parametric Galois extensions for a given group $G$. More precisely, for a $K$-regular Galois extension $F|K(t)$, we consider the translates $F(s)|K(s)$ by an extension $K(s)|K(t)$ of rational function fields (in other words, $s$ is a root of $g(X)-t$ for some rational function $g\in K(X)$).
We then show that if $F|K(t)$ is a $K$-regular Galois extension with group $G$ over a number field $K$, 
then for any degree $k\ge 2$ and almost all (in a density sense) rational functions $g$ of degree $k$, the translate of $F$ by a root field of $g(X)-t$ over $K(t)$ is non-$G$-parametric, i.e.\ not all Galois extensions of $K$ with group $G$ arise as specializations of $F(s)|K(s)$.
\end{abstract}
\keywords{Galois theory; polynomials; specialization; ramification}
\section{Introduction}
Let $K$ be a field. A Galois extension $F|K(t)$ is called $K$-regular (in the following simply {\textit regular}), if $F\cap \overline{K} = K$. For any $t_0\in K\cup\{\infty\}$ and any place $\mathfrak{p}$ of $F$ extending the $K$-rational place $t\mapsto t_0$, we have a residue field extension $F_{t_0}|K$. This is a Galois extension, not depending on the choice of place $\mathfrak{p}$. We call it the specialization of $F|K(t)$ at $t_0$.

Now let $G$ be a finite group. A Galois extension of a field $K$ with Galois group $G$ will be called a $G$-extension for short.
For $K$ a number field (or more generally a Hilbertian field), Hilbert's irreducibility theorem famously asserts that, given a regular $G$-extension $F|K(t)$, there are infinitely many $t_0\in K$ such that $F_{t_0}|K$ has the same Galois group $G$. A natural question is whether all $G$-extensions of $K$ arise in this way. This question can be made precise in several different ways: Firstly, the Beckmann-Black problem, first posed in \cite{Beckmann94} (for $K=\qq$), asks whether every Galois extension of $K$ with group $G$ is
the specialization of {\textit some} regular Galois extension with group $G$. This problem remains open (over number fields) for many groups $G$, and there is no group for which a negative answer is known. Beckmann showed in \cite{Beckmann94} that the answer is positive for abelian groups and symmetric groups; further examples were given by Black, for example for many dihedral groups in \cite{Black}.

One may further ask how many regular $G$-extensions are necessary to cover all $G$-extensions of $K$. This leads to the concept of $G$-parametric Galois extensions, introduced by Legrand in \cite{Legrand1}.
\begin{defi}[$G$-parametric Galois extension]
Let $K$ be a field, $F|K(t)$ be a regular Galois extension with group $G$. The extension $F|K(t)$ is called $G$-parametric (over $K$) if every Galois extension of $K$ with group $G$ arises as a specialization of $F|K(t)$. 
\end{defi}
Obviously, the existence of a parametric extension is sufficient, though not necessary, for a positive answer to the Beckmann-Black problem for the group $G$.
\section{Background on parametric extensions and statement of the Main Theorem}
In this paper, we focus on the case that $K$ is a number field. 
Very few $G$-parametric extensions over number fields are actually known. In particular, over $\mathbb{Q}$, only the subgroups of $S_3$ are known to possess a $G$-parametric extension.
On the other hand, it is quite difficult to show non-parametricity for a given regular extension, or even to show that there are any non-parametric extensions at all for a given group.
The last problem was solved over arbitrary number fields by Legrand in \cite{Legrand2}, where the following is shown:
\begin{satz}[Legrand]
Let $K$ be a number field, $F|K(t)$ a regular Galois extension with group $G$. Then there are infinitely many $k\in \nn$ with the following property:

The extension $F(\sqrt[k]{t})|K(\sqrt[k]{t})$ is a non-$G$-parametric regular Galois extension with group $G$.
More precisely, there are infinitely many specializations of $F|K(t)$ which are not specializations of $F(\sqrt[k]{t})|K(\sqrt[k]{t})$. 
\end{satz}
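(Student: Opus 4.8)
The plan is to separate a specialization $F_{t_0}$ of $F|K(t)$ from every specialization of the translate by means of its ramification at a single, carefully chosen prime of $K$, arranged so that this ramification behaviour is impossible for any specialization of $F(\sqrt[k]t)|K(\sqrt[k]t)$. Two preliminary observations come first. For a prime $k\nmid|G|$, writing $s=\sqrt[k]t$, the extension $K(s)|K(t)$ has no proper intermediate field and is not contained in $F$ (its degree does not divide $[F:K(t)]$), so $F\cap K(s)=K(t)$; hence $F(s)|K(s)$ is Galois with group $G$, and since this linear disjointness over $K(t)$ survives base change to $\overline K$, the extension $F(\sqrt[k]t)|K(\sqrt[k]t)$ is $K$-regular with group $G$. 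Moreover its specializations are the residue extensions at $K$-rational places $s=s_0$: for $s_0\in K\setminus\{0\}$ such a residue extension is exactly $F_{s_0^k}$, while (using that $k$ is coprime to every ramification index of $F|K(t)$, so by Abhyankar's lemma $s=0$ and $s=\infty$ are ramified in $F(s)|K(s)$ as soon as $0$, resp.\ $\infty$, is a branch point of $F|K(t)$) the residue extensions at $s_0\in\{0,\infty\}$ either are not $G$-extensions or equal $F_0$, resp.\ $F_\infty$. Thus every $G$-extension of $K$ occurring as a specialization of $F(\sqrt[k]t)|K(\sqrt[k]t)$ is isomorphic to $F_{t_1}$ for some $t_1\in(K^\times)^k\cup\{0,\infty\}$, and it suffices to produce infinitely many $t_0\in K$ with $F_{t_0}|K$ of group $G$ and $F_{t_0}\not\cong F_{t_1}$ for all such $t_1$.

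Next I would invoke the classical description of ramification in specializations (cf.\ \cite{Beckmann94}): there is a finite set $S_0$ of primes of $K$, depending only on $F|K(t)$, such that for $\mathfrak p\notin S_0$ the prime $\mathfrak p$ ramifies in $F_{t_0}|K$ precisely when $t_0$ reduces, modulo a degree-one prime above $\mathfrak p$, to a branch point $b_i$ of $F|K(t)$, and then (for a unique such $b_i$ with $v(t_0-b_i)=1$ at that prime) the inertia of $\mathfrak p$ in $F_{t_0}|K$ is $G$-conjugate to the inertia at $b_i$. Assume $F|K(t)$ has a branch point $b$ that is finite, nonzero and not a root of unity — this rules out only degenerate configurations, addressed below. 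Using Kummer theory together with Chebotarev's theorem, I would show that for all but finitely many primes $k\nmid|G|$ there is a positive-density set $P_k$ of primes $\mathfrak p$ with: $\mathfrak p\notin S_0$ and $\mathfrak p\nmid k$; $N\mathfrak p\equiv1\pmod k$; and every branch point of $F|K(t)$ that is finite, nonzero and not a root of unity has a degree-one prime of its residue field above $\mathfrak p$ and reduces there to a non-$k$-th power of $\mathbb F_\mathfrak p^\times$. Discarding in addition the finitely many $\mathfrak p$ at which two branch points, or a branch point and $0$ or $\infty$, become congruent, I obtain an infinite supply of usable primes.

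For each usable $\mathfrak p$, Hilbert's irreducibility theorem with a congruence condition at $\mathfrak p$ yields infinitely many $t_0\in K$ with $v(t_0-b)=1$ at $\mathfrak p$ and $F_{t_0}|K$ of group $G$; then $F_{t_0}|K$ is ramified at $\mathfrak p$. On the other hand, for any $t_1\in(K^\times)^k\cup\{0,\infty\}$ the reduction of $t_1$ modulo $\mathfrak p$ is $0$, $\infty$, or a $k$-th power in $\mathbb F_\mathfrak p^\times$, hence not the reduction of any branch point, so $F_{t_1}|K$ is unramified at $\mathfrak p$; therefore $F_{t_0}\not\cong F_{t_1}$. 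Thus $F_{t_0}$ is a $G$-extension of $K$ that is a specialization of $F|K(t)$ but not of $F(\sqrt[k]t)|K(\sqrt[k]t)$. Imposing one further congruence at the finitely many previously used primes makes these $F_{t_0}$ pairwise non-isomorphic as $\mathfrak p$ runs through $P_k$, giving the infinitely many required specializations, and this for each of the infinitely many admissible $k$.

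The hardest point will be the construction of $P_k$: one has to verify that the congruence $N\mathfrak p\equiv1\pmod k$, the splitting conditions in the residue fields of the branch points, and the non-$k$-th-power conditions at all branch points simultaneously cut out a nonempty conjugation-stable subset of the relevant Galois group, and to pin down exactly which $k$ are admissible. This is also where the degenerate cases enter — when every branch point is $0$, $\infty$, or a root of unity, or more generally when the inertia at every finite nonzero branch point is subconjugate to the inertia at $0$ or at $\infty$ (with $G$ cyclic ramified only over $\{0,\infty\}$ as the extreme case); there, being ramified at $\mathfrak p$ no longer distinguishes $F_{t_0}$ from the translate's specializations, and the invariant must be sharpened, e.g.\ by also following the Frobenius of $\mathfrak p$ in the decomposition group (still controllable by congruences on $t_0$), or these cases must be handled separately. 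By contrast, the bookkeeping of the first paragraph and the appeal to the ramification description are routine.
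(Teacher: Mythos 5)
First, note that the paper does not prove this statement at all: it is quoted from \cite{Legrand2}, and the paper's own machinery (Proposition \ref{legrand_crit} combined with Lemmas \ref{fullwr} and \ref{modp}) is built to prove the generalization, Theorem \ref{almostall} --- and deliberately for ``almost all'' $g$ rather than for the special function $g=X^k$, whose composition $f\circ g$ does not have the generic Galois group $(S_k)^d$, which is exactly why Legrand's statement only claims infinitely many $k$. Your route is essentially Legrand's original one, with his criterion unpacked: instead of comparing prime divisors of the ramification polynomials $m(X)$ and $m(X^k)$, you observe directly that the $G$-specializations of $F(\sqrt[k]{t})|K(\sqrt[k]{t})$ occur at $k$-th power values of $t$ and then separate via the specialization inertia theorem at primes $\mathfrak{p}$ where the branch points are non-$k$-th-power residues. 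That reduction (including the bookkeeping with $k$ prime to $|G|$ and Abhyankar at $s=0,\infty$) is sound.

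However, as a proof the proposal has two genuine gaps, both of which you flag but neither of which you close. (1) The construction of the prime set $P_k$ and the determination of the admissible $k$ --- which you yourself call the hardest point --- is precisely the analytic/Kummer-theoretic core of the argument: one must show, for infinitely many $k$, that the conditions $N\mathfrak{p}\equiv 1 \pmod k$, degree-one primes of the residue fields $K(b_i)$ above $\mathfrak{p}$, and simultaneous non-$k$-th-power reduction of all relevant branch points define a nonempty conjugation-stable Frobenius condition; this requires, e.g., that the branch points are not $k$-th powers in $K(\mu_k)$ and a compatibility argument in the corresponding Kummer extension, none of which is carried out. (2) The degenerate cases are mis-scoped: your separation step ``the reduction of $t_1$ is $0$, $\infty$ or a $k$-th power, hence not the reduction of any branch point, so $F_{t_1}|K$ is unramified at $\mathfrak{p}$'' already fails as soon as \emph{any single} branch point is $0$, $\infty$, or a root of unity (most obviously $1$, which is a $k$-th power modulo every prime, and $0$ or $\infty$ themselves, which are met by $k$-th power values $t_1=u^k$ with $v_{\mathfrak{p}}(u)\neq 0$); then some specializations of the translate can be ramified at $\mathfrak{p}$ with the same inertia data as your $F_{t_0}$, and ramification at $\mathfrak{p}$ no longer separates. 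Since branch point sets such as $\{0,1,\infty\}$ are the standard situation (e.g.\ for extensions obtained by rigidity), this is not a marginal exceptional configuration, and the proposed remedy (refining the invariant by inertia or Frobenius data, or handling these cases separately) is only sketched. So the proposal is a correct outline of the intended strategy, but not yet a proof.
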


The aim of this article is to sharpen this result,
by proving that in fact, {\textit almost all} rational functions $g$ of a fixed degree $k$, instead of only functions of the form $g=X^k$ (and only for {\textit some} $k$), yield non-parametric extensions in the same way. 

By a rational translate of a Galois extension $F|K(t)$ we mean an extension $F(s)|K(s)$, where $K(s)|K(t)$ is an extension of rational function fields, i.e.\ $s$ is a root of $g(X)-t$ for some rational function $g\in K(X)$. 
%Note that, if $F|K(t)$ is the splitting field of a polynomial $p(t,X)\in K(t)[X]$, then $F(s)|K(s)$ is isomorphic to the splitting field of $p(g(t),X)$ over $K(t)$.

\begin{satz}[Main Theorem]
\label{almostall}
Let $K$ be a number field with ring of integers $\mathcal{O}_K$, let $F|K(t)$ and $F_2|K(t)$ be (not necessarily distinct) regular Galois extensions with group $G$, and let $k\ge 2$. 
Then for almost all polynomials $g_1,g_2\in \mathcal{O}_K[X]$ of degree $k$, the rational translate $F(s)|K(s)$, where $s$ is a root of $t-g(X):=t-g_1(X)/g_2(X)$, is a regular Galois extension with group $G$ fulfilling the following:\\ 
There are infinitely many Galois extensions of $K$ with group $G$ which arise as specializations of $F_2|K(t)$, but not as specializations of $F(s)|K(s)$. In particular, $F(s)|K(s)$ is non-$G$-parametric.
\end{satz}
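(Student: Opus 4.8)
The strategy I would follow mirrors Legrand's proof, but replaces the very explicit function $g = X^k$ by a generic degree-$k$ rational function. The fundamental tool is a necessary condition for a $G$-extension $E|K$ to be a specialization of a regular extension $F(s)|K(s)$: roughly, for each prime $\mathfrak{p}$ of $\mathcal{O}_K$ at which $E|K$ is ramified with some inertia behaviour, the branch points of $F(s)|K(s)$ together with their inertia groups must be ``compatible'' with that ramification. Concretely, if $s_0 \in K \cup \{\infty\}$ is the specialization point producing $E$, then the primes of $K$ ramifying in $E$ (outside a fixed finite bad set depending only on $F$, $F_2$, $K$, $k$) must be among those where $s_0$ meets a branch point of $F(s)|K(s)$ modulo $\mathfrak{p}$, and the inertia group at such a prime is (conjugate to a power of) the inertia group at the corresponding branch point. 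This is the same ``specializations inherit ramification from branch points'' principle underlying Beckmann's theorem and Legrand's argument.

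\medskip

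\textbf{Step 1: Control the branch points of the translate.} I would first show that for almost all $g_1, g_2 \in \mathcal{O}_K[X]$ of degree $k$, the extension $F(s)|K(s)$ is regular Galois with group $G$ (this is essentially because $g(X) - t$ is irreducible over $\overline{K}(t)$ for generic $g$, so no constants are added), and, more importantly, that its branch point set is $g^{-1}(\{\text{branch points of } F|K(t)\})$, a set of $\le k \cdot |\mathrm{Branch}(F|K(t))|$ points, with inertia groups equal to those of $F|K(t)$ raised to powers dictated by the local ramification indices of $g$. For a generic rational function these preimages are ``as separated as possible'': they are distinct, none of them is rational over small-degree extensions in a controlled way, and — crucially — their reductions modulo primes $\mathfrak{p}$ are also generic. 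This genericity should be phrased as: the $g$ for which this fails lie in a subvariety of positive codimension in the affine space of coefficient tuples, hence have density zero. I expect this bookkeeping (what exactly ``almost all'' must exclude) to be somewhat delicate but routine.

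\medskip

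\textbf{Step 2: Produce many specializations of $F_2$ with forbidden ramification.} Using Hilbert irreducibility together with a counting/Chebotarev-type argument (exactly as Legrand does), I would produce infinitely many $t_0 \in K$ such that $(F_2)_{t_0}|K$ has group $G$ and is ramified at a prime $\mathfrak{p}$ (or a set of primes) whose inertia behaviour \emph{cannot} match any branch point of $F(s)|K(s)$. The point of passing to a \emph{translate} by a generic $g$ is that its branch points are $g$-preimages, which — by Step 1 — have ramification indices that are proper multiples (or otherwise restricted divisors) of those in $F|K(t)$; so the set of inertia groups realizable by $F(s)|K(s)$ at a prime is strictly smaller than for $F|K(t)$ itself, and one can find specializations of $F_2$ (even of $F$ when $F_2 = F$) landing outside it. Ensuring ``infinitely many'' and that these are genuinely distinct $G$-extensions is handled by letting $\mathfrak{p}$ range over an infinite set of primes and invoking that only finitely many $G$-extensions are unramified outside any fixed finite set (Hermite–Minkowski).

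\medskip

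\textbf{Step 3: Rule out $F(s)|K(s)$ as the source.} Finally I would show that if such an $E = (F_2)_{t_0}$ \emph{were} a specialization $F(s)_{s_0}$, then $s_0$ would have to reduce modulo the ramified prime $\mathfrak{p}$ to one of the (finitely many, generically well-separated) branch points of $F(s)|K(s)$, and the inertia group of $E$ at $\mathfrak{p}$ would be forced to be a power of the corresponding branch inertia group — contradicting the ramification data arranged in Step 2. The genericity of $g$ from Step 1 is what guarantees no unexpected coincidence of branch points modulo $\mathfrak{p}$ sabotages this. \textbf{The main obstacle} I anticipate is Step 1 in its quantitative form: proving that the ``bad'' set of $(g_1,g_2)$ — those where the branch points of the translate degenerate, or fail to reduce generically modulo the relevant primes, or where $F(s)|K(s)$ fails to have group $G$ — is contained in a proper Zariski-closed subset of the parameter space, uniformly enough that a single density-zero exceptional set works for \emph{all} the infinitely many primes $\mathfrak{p}$ used in Steps 2–3. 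Making the interaction between the (archimedean) density statement over coefficient space and the (non-archimedean) local conditions at varying $\mathfrak{p}$ precise is the technical heart of the argument.
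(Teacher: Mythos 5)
Your overall skeleton (Beckmann's principle that a specialization can only be ramified at primes where the specialization point meets a branch point modulo $\mathfrak{p}$, plus producing specializations of $F_2$ ramified at primes the translate cannot reach) is the right one --- this is exactly what Legrand's criterion encapsulates, and the paper simply invokes it as a black box (Proposition \ref{legrand_crit}). But the mechanism you propose for why the translate cannot reach those primes is wrong. You claim that the branch points of $F(s)|K(s)$, being $g$-preimages, carry ramification indices that are proper multiples (or restricted divisors) of those of $F|K(t)$, so that the set of realizable inertia groups shrinks. For a generic $g$ of degree $k$ the opposite holds: almost all $g$ have critical values disjoint from the branch points of $F|K(t)$ (Lemma \ref{techn_lemma}), and then Abhyankar's Lemma shows that the ramification indices, hence the inertia groups, of $F(s)|K(s)$ at its branch points are exactly those of $F|K(t)$ --- nothing shrinks. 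Index multiplication occurs only in the non-generic situation where $g$ has critical points above branch points of $F$ (e.g.\ Legrand's $g=X^k$ with $0$ or $\infty$ a branch point), which is precisely what one cannot rely on when proving a statement about almost all $g$. So the inertia-theoretic obstruction you plan to exploit in Steps 2--3 does not exist for the generic translates the theorem concerns, and the argument collapses there.

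The missing idea is that the obstruction is not group-theoretic but arithmetic, and lives in the residues of the branch points themselves: the ramification polynomial of the translate is $f\circ g$ (with $f$ the ramification polynomial of $F$), and for almost all $g$ the Galois group of $f\circ g$ over the splitting field $L$ of $f\cdot f_2$ is the full $(S_k)^d$ (Lemma \ref{fullwr}, proved by specializing the coefficients of $g$ suitably and then applying Cohen's quantitative Hilbert irreducibility theorem to get a density-one set of $g$). Chebotarev applied to an element of $(S_k)^d$ acting fixed-point-freely on every orbit then yields a positive density of primes $p$ at which $f\circ g$ has no root modulo $p$ while $f_2$ splits completely (Lemma \ref{modp}); at such $p$ no $K$-rational specialization point can meet a branch point of $F(s)|K(s)$ modulo $p$, so no specialization of the translate is ramified there, whereas specializations of $F_2$ ramified at $p$ abound --- and Legrand's criterion finishes the proof. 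Note also that the ``technical heart'' you anticipate --- making one density-zero exceptional set of $g$ work uniformly against infinitely many primes --- does not arise: the quantifiers go the other way. One first fixes $g$ in a density-one set (so that $\mathrm{Gal}(f\circ g|L)=(S_k)^d$), and only then produces the infinitely many primes by Chebotarev for that fixed $g$.
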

Here, ``almost all" is to be understood in the sense of ``density $1$", as explained in Definition \ref{height} below.

An application of Theorem \ref{almostall} is the broader question of when two regular Galois extensions possess the same set of specializations, showing that this almost never happens among rational translates. This is contained in Section \ref{spec_equiv}.

\begin{remark}
\label{counterexample}
The following example shows that there do exist regular Galois extensions $F|K(t)$ and non-trivial rational translates which yield the same set of specializations as $F|K(t)$: The extension $K(\sqrt{t})|K(t)$ is $C_2$-parametric, and therefore, every quadratic extension $F|K(t)$ with exactly two branch points, both $K$-rational, is also $C_2$-parametric (since it can be transformed into the above extension by fractional linear transformations in $t$). Now let $s$ be such that $t=s^2-1$, then the translate of $K(\sqrt{t})|K(t)$ by $K(s)$ is the splitting field of $X^2-(s^2-1)$ over $K(s)$, which is quadratic with branch point set $\{\pm 1\}$, and therefore parametric by the above.
%\\
\end{remark}
\section{Auxiliary results}
In the following, $K$ always denotes a number field and $\mathcal{O}_K$ its ring of integers.
\subsection{A non-parametricity criterion}
Legrand exhibits several sufficient criteria for a regular $G$-extension to be non-$G$-parametric. The one that we will make use of in this paper uses the mod-$p$ behaviour of minimal polynomials of the branch points of a given regular extension:
\begin{defi}
\label{rampol}
Let $F|K(t)$ be a regular Galois extension, with branch points $t_1,...,t_r \in \overline{\qq}\cup\{\infty\}$. Define the ramification polynomial of $F|K(t)$ (with respect to $t$) as the homogeneous polynomial $\prod_{i=1}^r\mu_{t_i}(X,Y)$, where
$$\mu_{t_i}(X,Y):=\begin{cases}X-t_iY, \text{ if } t_i\ne \infty\\ 
Y, \text{ if } t_i=\infty\end{cases}.$$
\end{defi}
\begin{remark}\ 
\begin{itemize}
\item[a)] As branch points come in sets of algebraic conjugates, the ramification polynomial is in $K[X,Y]$, and equals the product of all (homogenized) minimal polynomials of branch points (without multiplicities) over $K$.
\item[b)] We deliberately work with the homogeneous setup in order not to get exceptions for the branch point $t\mapsto \infty$ in the following. Of course one can also always reduce to the case that $\infty$ is not a branch point by suitable fractional linear transformations in $t$. The subtle problem with this is that we want to look at rational translates given by $g(s)=t$, where we count rational functions $g$ up to a given height (see Def. \ref{height}). Transformations in $t$ would change the rational function $g$ and in particular distort the height.
\end{itemize}
\end{remark}
Legrand gives the following criterion (see \cite{Legrand1}, Theorem 4.2):\footnote{Note that the somewhat more convoluted definition of the ramification polynomial in \cite[Section 4.1.1]{Legrand1}, including also the minimal polynomials $\mu_{1/t_i}(X)$ of inverses of branch points, is unnecessary here, since $\mu_{t_i}$ and $\mu_{1/t_i}$ have the same splitting behaviour modulo almost all primes.}
\begin{prop}
\label{legrand_crit}
Let $F_1|K(t)$ and $F_2|K(t)$ be two regular Galois extensions with group $G$ with ramification polynomials $m_1$ and $m_2$. Assume that there are infinitely many primes $p$ of $\mathcal{O}_K$ such that $m_1$ has a root modulo $p$ but $m_2$ does not.
Then there are infinitely many specializations of $F_1|K(t)$ with group $G$ which do not arise as specializations of $F_2|K(t)$. In particular, $F_2|K(t)$ is non-$G$-parametric.
\end{prop}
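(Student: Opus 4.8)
The plan is to use ramification of number fields as the arithmetic invariant that separates specializations of $F_1|K(t)$ from those of $F_2|K(t)$. The starting point is the classical fact, going back to Beckmann, that a specialization of a regular Galois extension has good reduction away from the branch locus. Concretely, I would first show that there is a finite set $S_2$ of primes of $\mathcal{O}_K$, depending only on $F_2|K(t)$, with the following property: if $p\notin S_2$ and $t_0\in K\cup\{\infty\}$ has reduction modulo $p$ not a zero of the reduced ramification polynomial $\overline{m_2}\in\mathbb{F}_p[X,Y]$, then $(F_2)_{t_0}|K$ is unramified at $p$. The homogeneous formulation of $m_2$ is exactly what lets the point $t_0\mapsto\infty$ fit this statement without a separate argument. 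In particular, if $p\notin S_2$ and $\overline{m_2}$ has \emph{no} zero in $\mathbb{P}^1(\mathbb{F}_p)$, then \emph{every} specialization of $F_2|K(t)$ is unramified at $p$, so no $G$-extension of $K$ ramified at such a $p$ can be a specialization of $F_2|K(t)$.

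The second step is to produce, for each relevant prime $p$, a specialization of $F_1|K(t)$ with group $G$ that \emph{is} ramified at $p$. Enlarge $S_2$ to a finite set $S$ that also handles $F_1$ and the finitely many small primes dividing the relevant ramification indices, and take a prime $p\notin S$ from the infinite set supplied by the hypothesis. Since $\overline{m_1}$ has a zero in $\mathbb{P}^1(\mathbb{F}_p)$, some branch point $t_i$ of $F_1|K(t)$ has a degree-one prime $\mathfrak{P}\mid p$ in $K(t_i)$; let $\bar{t_i}\in\mathbb{F}_p\cup\{\infty\}$ be its reduction. Choose $t_0\in K$ with $\bar{t_0}=\bar{t_i}$ and $v_{\mathfrak{P}}(t_0-t_i)=1$ (a generic $p$-adic lift), arranging simultaneously, via Hilbert's irreducibility theorem applied inside this congruence class at $p$ (together with finitely many auxiliary congruences), that $(F_1)_{t_0}|K$ has full group $G$. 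The Beckmann-type description of the inertia of a specialization near a branch point then gives that the inertia group of $p$ in $(F_1)_{t_0}|K$ is conjugate to the cyclic group generated by a generator of the inertia at $t_i$; since $t_i$ is a branch point this group is non-trivial, so $(F_1)_{t_0}|K$ is ramified at $p$. (Discarding the finitely many small $p$ keeps all the ramification involved tame, and the homogeneous setup again covers the case $t_i=\infty$.)

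Finally, letting $p$ range over the infinite set from the hypothesis yields $G$-extensions $T_p:=(F_1)_{t_0^{(p)}}|K$, each a specialization of $F_1|K(t)$ and each ramified at its prime $p$; by the first step none of the $T_p$ is a specialization of $F_2|K(t)$. Because a fixed number field has only finitely many Galois extensions with group $G$ ramified inside any prescribed finite set of primes (Hermite--Minkowski), and $T_p$ is ramified at $p$, the family $\{T_p\}$ contains infinitely many isomorphism classes. This produces the infinitely many specializations of $F_1|K(t)$ with group $G$ that are not specializations of $F_2|K(t)$; the non-$G$-parametricity of $F_2|K(t)$ is then immediate.

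The main obstacle is the second step: one needs the Beckmann-style analysis of how the inertia of a specialization at $p$ is governed by the behaviour at the branch point whose reduction $t_0$ approaches, in a form that also covers the branch point at infinity, and one must make the ramification-forcing congruence at $p$ compatible with keeping the Galois group of the specialization equal to $G$ via Hilbert irreducibility. The first and third steps are, by contrast, routine once the correct finite exceptional set of primes has been isolated.
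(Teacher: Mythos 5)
Your proposal is correct in outline and follows essentially the intended argument: the paper does not prove Proposition \ref{legrand_crit} itself but quotes it from \cite{Legrand1} (Theorem 4.2), and Legrand's proof runs exactly along your lines --- Beckmann's specialization-inertia theorem (for good primes, no meeting of $t_0$ with the branch divisor modulo $p$ forces the specialization to be unramified at $p$, while meeting a branch point with intersection multiplicity one forces nontrivial inertia), Hilbert's irreducibility theorem made compatible with the $p$-adic congruence conditions, and a Hermite--Minkowski argument to extract infinitely many distinct $G$-specializations of $F_1$ that cannot specialize from $F_2$. The technical point you single out as the main obstacle (the inertia description near a branch point, including the point at infinity, and its compatibility with Hilbert's theorem) is precisely what the cited Specialization Inertia Theorem supplies, so your sketch contains no genuine gap beyond invoking that known result.
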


\subsection{Height and density}
By a prime divisor of a polynomial $f$ we mean a prime $p$ such that
$f$ has a root modulo $p$. The main idea for the proof of our Main Theorem \ref{almostall} is now that a composition $f\circ g$ of polynomials almost never has the same set of prime divisors as the polynomial $f$. This is the combined content of Lemmas \ref{fullwr} and \ref{modp}, generalizing the results of \cite{Legrand2}.

To turn this into a precise statement, we first need a notion of height for polynomials over the algebraic integers of some number field. There are several ways to do this, but the following may be most convenient (see Section 2 of \cite{Cohen81}).\footnote{Since statements used in this paper, such as Hilbert's irreducibility theorem, hold for other notions of height (e.g.\ the logarithmic Weil height), it should not be difficult to regain our results for those height functions as well.}
\begin{defi}[Height of an algebraic integer]
Let $K$ be a number field with ring of integers $\mathcal{O}_K$. Let $\omega_1,...,\omega_n$ be an integral basis of $\mathcal{O}_K$ over $\mathbb{Q}$.
For $\alpha=\sum_{i=1}^n a_i \omega_i \in \mathcal{O}_K$ (with $a_i \in \zz$) define the height $H(\alpha)$ as $(\max_i |a_i|)^n$.
\end{defi}

We derive a notion of height for polynomials and rational functions. Note that the number field $K$ is always assumed to come with a fixed integral basis.

\begin{defi} [Height of a polynomial/ Density]
\label{height}
Let $K$ be a number field with ring of integers $\mathcal{O}_K$ and let $V_n:=\mathcal{O}_K[X]_{\le n}$ be the space of polynomials of degree $\le n$ over $\mathcal{O}_K$. For $f = \sum_{i=0}^n\alpha_iX^i \in V_n$, we define the height of $f$ as $H(f):=\max_{i=0}^n H(\alpha_i)$.\\
We say that a subset $S\subset V_n$ has density $d\in [0,1]$ if the limit 
$$\lim_{H\to \infty}\frac{|S\cap \{f\in V_n\mid H(f)\le H\}|}{|\{f\in V_n\mid H(f)\le H\}|}$$ exists and equals $d$.\\
In particular, we say that a property holds for almost all $f\in V_n$, if the density of the set of polynomials fulfilling this property is $1$.\\
The same notions will be used for $f=f_1+tf_2 \in V_n + t V_n$ (with a transcendental $t$), e.g.\ $H(f):=\max\{H(f_1), H(f_2)\}$. In slight abuse of notation, we will say that a property holds for almost all degree-$n$ rational functions over $\mathcal{O}_K$, if it holds for almost all $f=f_1+tf_2$ as above.
\end{defi}

We need a sufficiently strong version of Hilbert's irreducibility theorem over arbitrary number fields, considering integer specializations up to a given height. The following is contained in \cite{Cohen81}.
\begin{satz}[Cohen]
\label{hilbert_cohen}
Let $K|k$ be an extension of number fields, let $t:=(t_1,...,t_s)$ and let $f(X,t)\in K[t,X]$ be a non-zero polynomial in the indeterminates $X$ and $t_1,...,t_s$ (with $s\ge 1$). %X mehrdimensional nötig? 
Then for $N$ sufficiently large, the number of integer specializations $t\mapsto (\alpha_1,...,\alpha_s) \in (\mathcal{O}_k)^s$ such that $H(\alpha_i)\le N$ for all $i=1,..,s$ and such that
the Galois group of $f(X,\alpha)$ over $K$ does not equal the Galois group of $f(X,t)$ over $K(t)$, is at most
$$c\cdot N^{s-1/2}\cdot \log(N),$$ for some constant $c$ only depending on $s$, $K$ and $f$.
\end{satz}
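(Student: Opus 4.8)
The plan is to bound the \emph{exceptional set} $E_N := \{\alpha \in \mathcal{O}_k^s : H(\alpha_i)\le N \text{ for all } i,\ \mathrm{Gal}(f(X,\alpha)/K)\neq G\}$, where $G := \mathrm{Gal}(f(X,t)/K(t))$ is the generic group, by a large-sieve argument driven by the Chebotarev distribution of Frobenius elements. First I would pass to the Galois closure: let $\Omega|K(t)$ be the splitting field of $f$, a regular $G$-cover of $\mathbb{A}^s_K$, and discard the $O(N^{s-1})$ specializations lying on the fixed discriminant/branch locus, which are negligible by a Schwartz--Zippel count. For every remaining $\alpha$ the specialized group $\Gamma_\alpha := \mathrm{Gal}(f(X,\alpha)/K)$ is conjugate to a subgroup of $G$, and $\Gamma_\alpha = G$ unless $\Gamma_\alpha$ lies inside a maximal proper subgroup. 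Since $G$ has only finitely many conjugacy classes of maximal subgroups, it suffices to bound, for each such $H<G$, the set $E_N(H)$ of $\alpha$ with $\Gamma_\alpha$ contained in a conjugate of $H$, and then sum.

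The group-theoretic input that converts this into a sieve condition is the classical fact (Jordan) that the union $\bigcup_{g\in G} H^g$ of conjugates of a proper subgroup is a \emph{proper} subset of $G$: indeed it has at most $[G:H](|H|-1)+1 = |G|-[G:H]+1 < |G|$ elements. Hence there is a conjugacy class $C_H\subseteq G$ with $C_H\cap H^g = \emptyset$ for all $g$. Now for a prime $\mathfrak{p}$ of $k$ of good reduction that splits completely in $K$, and for $\alpha$ reducing to $\bar\alpha \bmod \mathfrak{p}$, the Frobenius class $\mathrm{Frob}_{\mathfrak{p},\alpha}\in G$ of the $G$-cover at $\bar\alpha$ lies in $\Gamma_\alpha$. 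Consequently, if $\alpha \in E_N(H)$ then $\mathrm{Frob}_{\mathfrak{p},\alpha}\notin C_H$, i.e.\ the reduction $\bar\alpha$ avoids the set $\Omega_{\mathfrak{p}} := \{\bar\beta\in \mathbb{F}_{\mathfrak{p}}^{\,s} : \mathrm{Frob}_{\mathfrak{p},\beta}\in C_H\}$, for \emph{every} such $\mathfrak{p}$.

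It remains to show that $\Omega_{\mathfrak{p}}$ is large and to run the sieve. Applying the Chebotarev density theorem for the $G$-cover over the finite field $\mathbb{F}_{\mathfrak{p}}$ (that is, Lang--Weil / Weil-type point counts on the associated varieties) gives $|\Omega_{\mathfrak{p}}| = \tfrac{|C_H|}{|G|}(N\mathfrak{p})^s + O\big((N\mathfrak{p})^{s-1/2}\big)$, with an implied constant depending only on the geometry of the cover, hence on $f$. Thus $|\Omega_{\mathfrak{p}}|\ge \delta_H (N\mathfrak{p})^s$ with $\delta_H := |C_H|/(2|G|)>0$ for all $\mathfrak{p}$ of norm exceeding some bound depending on $f$. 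Feeding these forbidden-residue proportions into the large sieve for number fields (Huxley), sieving with the $\gg Q/\log Q$ degree-one primes of norm $\le Q$, and choosing the sieve level $Q\asymp N^{1/2}$ to balance the main term ($\asymp N^s$) against the saving, yields $|E_N(H)|\ll N^{\,s-1/2}\log N$. Summing over the finitely many classes of maximal subgroups $H$ gives the theorem, with $c$ depending on $s$, $K$ and $f$.

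The main obstacle is the uniform lower bound $|\Omega_{\mathfrak{p}}|\ge \delta_H(N\mathfrak{p})^s$: this requires a Chebotarev/Lang--Weil estimate for the $G$-cover with an explicit error term that is uniform in $\mathfrak{p}$, together with honest bookkeeping of the relative extension $K|k$ (restricting to $\mathfrak{p}$ split completely in $K$, so that the residue field of $K$ above $\mathfrak{p}$ is $\mathbb{F}_{\mathfrak{p}}$ and the cover is defined over it) and of the finitely many primes of bad reduction. A secondary technical point is the precise form of the $s$-variable large sieve over $\mathcal{O}_k$ needed to extract exactly the exponent $s-1/2$ and a single factor $\log N$; the qualitative mechanism, however, is entirely standard once the positive forbidden density per prime is in hand.
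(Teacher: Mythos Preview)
The paper does not prove this statement at all: it is quoted as a known result from Cohen \cite{Cohen81} and used as a black box. So there is no ``paper's own proof'' to compare against.

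That said, your sketch is essentially Cohen's original argument: reduce to finitely many maximal subgroups $H<G$, use Jordan's lemma to find a conjugacy class $C_H$ missing every conjugate of $H$, invoke Lang--Weil/Chebotarev over finite fields to get a positive proportion of forbidden residues modulo each good prime, and feed this into the $s$-variable large sieve over $\mathcal{O}_k$ with level $Q\asymp N^{1/2}$. The obstacles you flag (uniformity of the Lang--Weil error in $\mathfrak{p}$, handling the relative extension $K|k$ and bad primes, and the precise large-sieve constant) are exactly the technical content of Cohen's paper, so your proposal is an accurate high-level summary of that proof rather than an alternative route.
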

\section{Proof of the Main Theorem}
We start with an easy observation. This and some variants (with straightforward modifications to the proof) will be used several times in the proofs of later statements. 
%Its proof essentially comes down to the following completely elementary fact:\\
%Let $K|k$ be a number field extension, $\alpha:=(\alpha_1,...,\alpha_r)$ with independent transcendentals $\alpha_i$, and let $0\ne f(\alpha)\in \mathcal{O}_K[\alpha]$. Then $f(\alpha_0)\ne 0$ for almost all (in the sense of density $1$) $\alpha_0\in (\mathcal{O}_k)^r$.
%
\begin{lemma}
\label{techn_lemma}
Let $S \subset \overline{\qq}\cup\{\infty\}$ be a finite subset, and let $k \in \nn$. For a degree-$k$ rational function $g$ over $K$, let $K(s)|K(t)$ be a root field of $g(X)-t$. Then for almost all degree-$k$ rational functions $g$ (in the sense of density $1$), no branch point of $K(s)|K(t)$ lies in $S$.
\end{lemma}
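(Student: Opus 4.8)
The branch points of $K(s)|K(t)$ are exactly the images under $g$ of the ramification locus of the cover $g\colon \mathbb{P}^1 \to \mathbb{P}^1$ (together with possibly $\infty$, coming from the behaviour at poles of $g$ or at the fiber over $\infty$). So the plan is to count, for each fixed $t_0 \in S$, the rational functions $g$ of degree $k$ for which $t_0$ is a branch point, and show that this set has density $0$; a finite union of density-$0$ sets is density $0$, which gives the claim. Write $g = g_1/g_2$ with $g_1, g_2 \in \mathcal{O}_K[X]$ of degree $\le k$ (not both of degree $<k$), so that $g$ ranges over the space $V_k + tV_k$ in the sense of Definition \ref{height}, and the height used is $\max\{H(g_1),H(g_2)\}$.

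First I would fix $t_0 \in S$ with $t_0 \ne \infty$. Then $t_0$ is a branch point of $K(s)|K(t)$ if and only if the polynomial $g_1(X) - t_0 g_2(X)$ has a multiple root in $\overline{\mathbb{Q}}$, i.e. if and only if its discriminant $\mathrm{disc}_X(g_1 - t_0 g_2)$ vanishes. This discriminant is a polynomial expression in the coefficients of $g_1$ and $g_2$ (and in $t_0$), and the key point is that it is \emph{not identically zero} as a function of $(g_1, g_2)$: for a generic choice of $g$ of degree $k$ the cover is (tamely) ramified with all ramification simple and no branch point equal to the prescribed value $t_0$, so there is at least one $(g_1,g_2)$ with nonvanishing discriminant. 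Hence the bad set is contained in the zero set of a fixed nonzero polynomial $P_{t_0}$ in the $2(k+1)$ coefficient variables. The case $t_0 = \infty$ is analogous: $\infty$ fails to be a branch point precisely when $g_2$ has degree exactly $k$ and is separable and coprime to $g_1$ in the appropriate sense, again a Zariski-open condition cut out by the nonvanishing of a fixed nonzero polynomial.

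The remaining step is the elementary counting lemma: the set of integer points $(a_0,\dots,a_k,b_0,\dots,b_k)$ (with entries in $\mathcal{O}_K$, expressed in the fixed integral basis, so really lattice points in $\mathbb{Z}^{2n(k+1)}$) of height $\le H$ lying on a fixed hypersurface $P_{t_0} = 0$ is $O(H^{2n(k+1)-1})$, whereas the total number of such points is $\asymp H^{2n(k+1)}$; hence the ratio tends to $0$. This is a standard fact — a nonzero polynomial in $m$ variables has $O(H^{m-1})$ zeros in a box of side $H$, by induction on $m$ (fix all but one variable; either the polynomial vanishes identically in the last variable, which happens on a lower-dimensional set handled by induction, or it has boundedly many roots in that variable). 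The only thing to be slightly careful about is that after reducing mod the parametrization $g \leftrightarrow (g_1,g_2)$ one is counting \emph{pairs} of polynomials, so one works in the product space and uses that a nonzero polynomial on a product of boxes still has negligibly many integer zeros; and one should note that the exceptional locus where $\deg g < k$ (so that $g$ is not really a degree-$k$ function) is itself a proper subvariety and hence already density $0$, so it does no harm to include or exclude it.

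I expect the only real obstacle to be the "not identically zero" claims for the discriminant-type polynomials $P_{t_0}$: one must exhibit, for each prescribed finite set $S$, a single degree-$k$ rational function whose branch locus avoids all of $S$. This is intuitively obvious (the branch points of a "random" degree-$k$ map move around freely), and can be made rigorous either by an explicit construction — e.g. start from a Morse-type polynomial or a Belyi-like map and apply a small perturbation, or use that over an infinite field one can choose $g$ so that $g - t_0 g_2$ is separable for each of the finitely many $t_0$ simultaneously — or by a dimension count on the Hurwitz-type parameter space showing that the condition "$t_0$ is a branch point" is codimension $\ge 1$. Everything else is bookkeeping with the height.
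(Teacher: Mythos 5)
Your proposal is correct and follows essentially the same route as the paper: the paper also reduces the statement to the non-vanishing, for generic coefficients, of the discriminant of $g_1-tg_2$ evaluated at the points of $S$ (phrased there as the product $\prod_{i,t_0}(R_i-t_0)$ over the generic roots $R_i$ of that discriminant), and then invokes that a nonzero polynomial condition on the coefficients fails only on a density-zero set. Your per-$t_0$ decomposition, the explicit $O(H^{m-1})$ lattice-point count, and the separate treatment of $t_0=\infty$ are just more detailed versions of steps the paper leaves implicit.
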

\begin{proof}
Firstly, we can assume that the denominator of $g$ is separable, so $t\mapsto \infty$ is not a branch point.\\
Let $g_1:=\sum_{i=0}^k \alpha_i X^i$ and $g_2:=\sum_{j=0}^k \beta_j X^j$ be generic polynomials of degree $k$ (with independent transcendentals $\alpha_i,\beta_j$. Write ${\alpha}:=(\alpha_0,...,\alpha_k)$ and ${\beta}:=(\beta_0,...,\beta_k)$. Let $\Delta$ be the discriminant of $g_1-tg_2$. After multiplying with a suitable factor, we can assume $\Delta\in \mathcal{O_K}[{\alpha},{\beta},t]$. The branch points of $K(s)|K(t)$ are just the roots of $\Delta({\alpha}_0,{\beta}_0,t)$ (with some specialization ${\alpha} \to {\alpha}_0$, ${\beta}\to {\beta}_0$).
Let $R_1,...,R_n$ be the roots of $\Delta$ in $\overline{K({\alpha},{\beta})}$. Certainly no $R_i$ is contained in $S$ (otherwise, every $K(s)|K(t)$ would have a branch point at this $R_i$).
Thus $\prod_{\stackrel{1\le i\le n}{t_0\in S}}(R_i-t_0) \in K(S)({\alpha},{\beta})$ is non-zero, and therefore remains non-zero under almost all specializations of ${\alpha},{\beta}$ in $\mathcal{O}_K$. This shows the assertion.
\end{proof}

In the following, we will consider compositions $f\circ g$ of a homogeneous polynomial $f\in \mathcal{O}_K[X,Y]$ with a rational function $g=g_1/g_2$ ($g_i\in \mathcal{O}_K[X]$). By this, we mean $f(g_1(X),g_2(X))$. Of course, if $Y$ does not divide $f$, this is just the same as the numerator of $f(g_1/g_2,1)$.
%
%\begin{lemma}
%\label{fullwr}
%Let $f\in \zz[X]$ be a non-constant irreducible polynomial with Galois group $G$, and let $k\ge 2$. Then for almost all $g\in \zz[X]$,
%%oder allgemeiner für rationale Funktionen...
%it holds that $Gal(f\circ g) \cong S_k \wr G$ is the full wreath product of $S_k$ with $G$. 
%%Besser: nicht nur für irreduzible Polynome formulieren?!?
%\end{lemma}
%\begin{proof}
%Obviously $Gal(f\circ g)$ is a subgroup of $S_k\wr G$ for all $g$ of degree $k$. 
%\end{proof}

%Stärkere Version:
\begin{lemma}
\label{fullwr}
Let $f\in \mathcal{O}_K[X,Y]$ be homogeneous and separable of degree $d\ge 1$, and let $L|K$ be a finite extension containing a splitting field of $f$. Let $k\ge 2$. Then for almost all rational functions $g$ over $\mathcal{O}_K$ of degree $k$, the Galois group of $f\circ g$ over $L$ is isomorphic to $(S_k)^d$, in the natural intransitive action with $d$ orbits of length $k$.
\end{lemma}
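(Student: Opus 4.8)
The plan is to write $f = c\prod_{i=1}^d (X - \theta_i Y)$ over $L$ (absorbing the finitely many $\theta_i = \infty$ cases into factors $Y$, which is harmless since $f\circ g$ then picks up a factor $g_2$), so that $f\circ g = c\prod_{i=1}^d (g_1(X) - \theta_i g_2(X))$. The Galois group of $f\circ g$ over $L$ is a subgroup of $(S_k)^d$ acting with $d$ orbits of size $k$, namely the roots of the $d$ factors $g_1 - \theta_i g_2$; the assertion is that for almost all $g$ this subgroup is the full product. By Goursat's lemma, a subgroup $H\le (S_k)^d$ that projects onto each factor $S_k$ and fails to be the whole product must give rise to a nontrivial common quotient identification between (the images in) two of the coordinates; in particular two of the polynomials $g_1 - \theta_i g_2$ and $g_1 - \theta_j g_2$ would have related splitting fields over $L$. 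So I would proceed in two steps: (1) show each individual factor $g_1(X) - \theta_i g_2(X)$ has Galois group $S_k$ over $L(\theta_i) = L$ for almost all $g$; (2) show the splitting fields of the $d$ factors are ``independent'' (linearly disjoint) over $L$ for almost all $g$.

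For step (1): treat $g_1 = \sum \alpha_m X^m$, $g_2 = \sum \beta_m X^m$ with independent transcendentals, so that $g_1 - \theta g_2$ (for a fixed $\theta\in L$) is, up to the generic leading coefficient, a generic degree-$k$ polynomial over $L(\alpha,\beta)$; its Galois group over $L(\alpha,\beta)$ is $S_k$. Now apply Cohen's form of Hilbert irreducibility (Theorem \ref{hilbert_cohen}) with the $2(k+1)$ coefficient parameters: the number of integer specializations of $(\alpha,\beta)$ of height $\le N$ for which the Galois group drops is $O(N^{2(k+1) - 1/2}\log N)$, which is a density-zero subset of the $\asymp N^{2(k+1)}$ total. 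Taking the union over the $d$ values $\theta_1,\dots,\theta_d$ (a finite set) keeps density zero, so for almost all $g$ every factor has group $S_k$ over $L$.

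For step (2): here the key observation is that for distinct $\theta_i \ne \theta_j$, the polynomials $g_1 - \theta_i g_2$ and $g_1 - \theta_j g_2$ are ``generically unrelated''. Concretely, consider the Galois group of the product $(g_1 - \theta_i g_2)(g_1 - \theta_j g_2)$ over $L(\alpha,\beta)$: I claim it is $S_k \times S_k$. This can be seen by a specialization/degeneration argument — e.g. specialize $\beta$ so that $g_2$ becomes a constant, reducing to two polynomials $g_1 - c_i$ and $g_1 - c_j$ which are translates of a generic $g_1$; these have disjoint branch loci generically and one checks the joint Galois group is the full product (for instance via an explicit inertia-generator/transvection count, or by induction on $d$ using the fact that adjoining one more generic translate cannot collapse the group since its branch points are new transcendentals). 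Granting the joint generic group is $(S_k)^d$, one final application of Theorem \ref{hilbert_cohen} to the product polynomial $\prod_i (g_1 - \theta_i g_2)\in L[\alpha,\beta,X]$ shows the group is preserved under almost all integer specializations, giving the lemma.

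The main obstacle is step (2) — proving that the $d$ splitting fields are linearly disjoint at the generic level, i.e. that the generic joint Galois group really is all of $(S_k)^d$ rather than a proper subdirect product. The cleanest route is probably the branch-point argument: the ramification locus (in the $X$-line, over $\overline{L(\alpha,\beta)}$) of $g_1 - \theta_i g_2 = 0$ as a cover of the $\theta$-line degenerates, or rather the covers $g\colon \mathbb{P}^1 \to \mathbb{P}^1$ pulled back at the $d$ distinct points $\theta_i$ have ``generic'' monodromy data that force independence; alternatively one argues by induction on $d$, adjoining the factor $g_1 - \theta_d g_2$ last and noting that its Galois group over the compositum of the first $d-1$ splitting fields is still $S_k$ because that compositum is a regular extension unramified where the new factor ramifies. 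Either way this is where the real content lies; steps (1) and the final descent are routine applications of Cohen's theorem.
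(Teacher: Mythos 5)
Your outer architecture is the same as the paper's: factor $f$ into linear forms over $L$, so that $f\circ g$ splits into $d$ degree-$k$ factors; show each factor is generically $S_k$; show the generic joint group is $(S_k)^d$; then descend to almost all integer specializations via Theorem \ref{hilbert_cohen}. Steps (1) and the final descent are indeed routine. The problem is that step (2), which you yourself flag as the real content, is not actually proved, and the two shortcuts you offer do not work as stated. First, the reduction to pairs via Goursat is invalid for $S_k$: since $S_k$ is not perfect, a proper subdirect product of $(S_k)^d$ can project onto \emph{every} pair of factors, e.g.\ $\{(\sigma_1,\dots,\sigma_d)\in (S_k)^d \mid \prod_{i}\mathrm{sgn}(\sigma_i)=1\}$ for $d\ge 3$. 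Equivalently, pairwise linear disjointness of the $d$ splitting fields does not imply linear disjointness of the whole family (the quadratic discriminant subfields may satisfy a multiplicative relation). So even a complete proof of your pairwise claim $S_k\times S_k$ would not finish the lemma.

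Second, the degeneration ``specialize $\beta$ so that $g_2$ is constant'' both hides the content and is technically awkward. It collapses the factor corresponding to $\theta_i=\infty$ (the factor $g_2$ of $f\circ g$ when $Y\mid f$, which the homogeneous setup is designed to keep) to a constant, and what remains is exactly the assertion that the translates $g_1-c_i$ of a generic polynomial have jointly full Galois group; your ``one checks'' and ``generic monodromy data force independence'' are precisely the statement to be proved. Moreover, over the multi-parameter base $L(\alpha)$ (or $L(\alpha,\beta)$) the branch-point/inertia argument you gesture at is not set up: one must first isolate a single transcendental to play the role of the function-field variable and control the branch divisors. This is what the paper does: it specializes $\alpha_j\mapsto c\beta_j$ (and then the $\beta_j$ to suitable integers), turning the factors into $\mu_i g_2-s$ with $\mu_i=c-\theta_i$ and a single transcendental $s$; for almost all such choices the finite branch loci (the critical values of $g_2$ scaled by $\mu_i$) are pairwise disjoint and all finite inertia is generated by transpositions, so each splitting field is a \emph{regular} $S_k$-extension of $L(s)$, and since $\overline{L}(s)$ has no nontrivial regular extension ramified only over $s=\infty$, disjointness of the finite branch loci forces linear disjointness of all $d$ splitting fields simultaneously (the $\theta=\infty$ factor contributes a constant-field extension, disjoint for free). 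Your induction-on-$d$ remark is this argument in spirit, but without the one-parameter reduction and the explicit branch-locus computation it remains an unproven claim; as written, the proposal has a genuine gap at its central step.
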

\begin{proof}
Let $\lambda_1(X,Y),...,\lambda_d(X,Y) \in L[X,Y]$ be the (homogeneous) linear factors of $f$, i.e.
$\lambda_i=X-t_iY$ for some $t_i\in L$, or $\lambda_i=Y$.
 
Set $G_1:= (\sum_{i=1}^{k} \alpha_i X^i) - t$, $\tilde{G_1}:=G_1+t$ and $G_2:=\sum_{j=0}^{k}\beta_j X_j$, with independent transcendentals $\alpha_i,\beta_j$ and $t$ (i.e.\ $G_1$ and $G_2$ are generic polynomials of degree $k$, only the constant coefficient of $G_1$ has been named separately because of the following treatment). 
Let $G=G_1/G_2$, and set $\alpha:=(\alpha_1,...,\alpha_k)$, $\beta:=(\beta_0,...,\beta_{k})$.
The polynomial $f\circ G$ then factors over $L(\alpha,\beta)$ as $\prod_{i=1}^d \lambda_i(G_1,G_2)$.

Now observe the polynomials $P_i:= \lambda_i(G_1,G_2) = \lambda_i(\tilde{G_1}-t, G_2)$, $i=1,...,d$.
These polynomials all have Galois group $S_k$ over $L(\alpha,\beta,t)$, since even the specialization $\beta_0=...=\beta_k=0$ (or $\alpha_1=...=\alpha_k=t=0$ in case $\lambda_i=Y$) leaves a generic degree-$k$-polynomial. 

Next we show that for all $i$ the splitting field $E_i$ of $P_i$ is linearly disjoint over $L(\alpha,\beta,t)$ to the composite of all other $E_j$; in other words, 
$$Gal(f\circ G|L(\alpha,\beta,t)) = Gal(\prod_{i=1}^d P_i |L(\alpha,\beta,t)) = (S_k)^d.$$ This is certainly true, if it holds for some specialization of the $\alpha$ and $\beta$.
First, specializing $\alpha_j\mapsto c\cdot \beta_j$ for all $j=1,...,k$ (with some fixed $c\in L)$ 
maps $P_i$ to $\underbrace{(c-t_i)}_{=:\mu_i} G_2 -\underbrace{(t+c)}_{=:s}$ (in case $\lambda_i\ne Y$)
or to $G_2$ (in case $\lambda_i=Y$).\\
Choosing $c$ appropriately, we may assume that none of the $\mu_i$ are zero.\\
Since the polynomial $G_2$ is generic, one shows as in Lemma \ref{techn_lemma} that almost all specializations of $\beta_0,...,\beta_k$ in $\mathcal{O}_L$ have the property that no two finite branch points of $g_2-s$ ($g_2$ being the specialization of $G_2$), viewed as a polynomial over $L(s)=L(t)$, differ by any ratio $\frac{\mu_j}{\mu_k}$ (for $1\le j\ne k\le d$). This however means that no two of the polynomials
$\mu_i g_2 - s$ have a common finite branch point.\\
Furthermore, for almost all such specializations, $\mu_i g_2-s$ will have squarefree discriminant and so only has simple branch points apart from infinity (i.e.\ inertia group generated by a transposition), which in particular means that the splitting fields of the $\mu_i g_2 -s$ are regular $S_k$-extensions of $L(s)$.
 Now since there is no non-trivial regular extension of $\overline{L}(s)$ ramified only at $s=\infty$, the splitting fields of the $\mu_i g_2-s$ must be linearly disjoint even over $\overline{L}$ (as the sets of their finite branch points are disjoint), and so their composite is still a regular extension of $L(s)$. In addition, the splitting field of the polynomial $g_2$ (which occurs if the linear factor $\lambda_i=Y$ occurs) is linearly disjoint over $L$ to the composite of all the other splitting fields, since it is a constant field extension. We have therefore shown $Gal(\prod_{i=1}^d P_i |L(\alpha,\beta,t)) = (S_k)^d$. 

Finally, by Hilbert's irreducibility theorem (in particular, the version in Theorem \ref{hilbert_cohen}) almost all specializations of the $\alpha_i$, $\beta_j$ and $t$
to values in $\mathcal{O}_K$ leave the Galois group over $L$ invariant, i.e.\ $Gal(f\circ g|L) = (S_k)^d$
for almost all degree-$k$ rational functions $g=g_1/g_2$ with $g_1,g_2 \in \mathcal{O}_K[X]$. 
\end{proof}

\begin{lemma}
\label{modp}
Let $L$, $f$ and $g$ be as in Lemma \ref{fullwr}, i.e.\ such that $Gal(f\circ g|L) =S_k^d$. Let $f_2 \in \mathcal{O}[X,Y]$ be homogeneous and completely split over $L$.\footnote{Note that this includes of course the special case $f_2=f$.} Then there is a positive density set of primes $p$ of $\mathcal{O}_K$ such that $f_2$ splits completely modulo $p$, but $f\circ g$ does not have a root modulo $p$.
\end{lemma}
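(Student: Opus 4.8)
The plan is to use the Chebotarev density theorem applied to the Galois extension whose group is $S_k^d$. Let $N \subset \mathcal{O}_K$ be a splitting field of $f\circ g$ over $L$; by Lemma \ref{fullwr} we have $\mathrm{Gal}(N|L) \cong S_k^d$ acting intransitively with $d$ orbits $\Omega_1,\dots,\Omega_d$ of size $k$, one for each linear factor $\lambda_i$ of $f$. The key point is that, for all but finitely many primes $p$ of $\mathcal{O}_K$ (those not dividing the relevant discriminants and leading coefficients), the splitting type of $f\circ g$ modulo $p$ is governed by the cycle type of a Frobenius element $\sigma_p \in S_k^d$ on each of the $d$ orbits: $f\circ g$ has a root modulo such a good prime $p$ (lying over a rational prime unramified in $N$ and in $L$) if and only if $\sigma_p$ fixes at least one point in at least one of the orbits $\Omega_i$.

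**Choosing the right Frobenius class.**

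First I would fix an element $\sigma = (\sigma^{(1)},\dots,\sigma^{(d)}) \in S_k^d$ such that each component $\sigma^{(i)} \in S_k$ is a $k$-cycle (this is possible precisely because $k\ge 2$, so a $k$-cycle has no fixed point). Then $\sigma$ fixes no point in any orbit, so any prime $p$ with Frobenius conjugacy class equal to that of $\sigma$ in $\mathrm{Gal}(N|L)$ will have the property that $f\circ g$ has no root modulo $p$. By the Chebotarev density theorem for the extension $N|L$ (and then restricting scalars: a positive-density set of primes of $\mathcal{O}_L$ with a prescribed Frobenius lies over a positive-density set of primes of $\mathcal{O}_K$, after possibly shrinking to those of residue degree one over $\mathbb{Q}$), the set of such primes $p$ has positive density. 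The remaining requirement is that we can simultaneously force $f_2$ to split completely modulo $p$.

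**Incorporating the condition on $f_2$.**

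Since $f_2$ is completely split over $L$, its roots generate a subextension contained in $L$ itself, so once $p$ is a prime of $\mathcal{O}_K$ of residue degree one that splits completely in $L$ (equivalently, lies below a degree-one prime of $\mathcal{O}_L$), $f_2$ automatically splits completely modulo $p$ — this is already implied by requiring the primes of $\mathcal{O}_L$ we pick to have residue degree one over $\mathbb{Q}$. Concretely: enlarge $L$ to the Galois closure $\hat L$ of $L$ over $\mathbb{Q}$ if necessary, and note that $N|\mathbb{Q}$ is then a (not necessarily abelian) extension into which we apply Chebotarev; we want primes $q$ of $\mathbb{Q}$ whose Frobenius in $\mathrm{Gal}(N|\mathbb{Q})$ restricts to the identity on $\hat L$ and to (a conjugate of) $\sigma$ on the $S_k^d$-part. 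Such a Frobenius class is non-empty and normal in $\mathrm{Gal}(N|\mathbb{Q})$ as long as $N|L$ and $L|\mathbb{Q}$ fit together compatibly; the honest way to see non-emptiness is that $\mathrm{Gal}(N|L) \cong S_k^d$ is a normal subgroup of $\mathrm{Gal}(N|\mathbb{Q})$ whose quotient is $\mathrm{Gal}(L|\mathbb{Q})$, and any element of the coset corresponding to $\mathrm{id}_{\hat L}$ that maps into the desired conjugacy-closed subset of $S_k^d$ works. Then Chebotarev gives a positive density set of such $q$, hence a positive density set of primes $p$ of $\mathcal{O}_K$ above them, each of which splits $f_2$ completely (residue degree one) while $f\circ g$ has no root modulo $p$.

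**Main obstacle.**

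The routine analytic input (Chebotarev) is standard; the part requiring care is the \emph{reduction modulo $p$ dictionary}: one must verify that for all but finitely many $p$, the factorization type of $f\circ g \bmod p$ genuinely matches the cycle structure of $\sigma_p$ acting on the $d$ orbits — i.e.\ that reduction mod $p$ commutes with taking the splitting field, that no collapsing of roots or orbits occurs, and that the homogeneous/dehomogenized bookkeeping (the possible factor $Y$, behaviour at infinity, leading coefficients of $g_1, g_2$) is handled so that ``$f\circ g$ has a root mod $p$'' corresponds exactly to ``$\sigma_p$ has a fixed point in some orbit.'' This is where one invokes that $g$ and $f$ are separable and that only finitely many bad primes arise; once that is pinned down, choosing $\sigma$ with all components $k$-cycles finishes the argument.
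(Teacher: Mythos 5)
Your core argument is the same as the paper's: pick a fixed-point-free element $\sigma$ of $\mathrm{Gal}(f\circ g|L)\cong S_k^d$ (taking every component a $k$-cycle is fine, though fixed-point-freeness on each of the $d$ orbits is all that is used), observe that any such $\sigma$ automatically fixes the roots of $f_2$ since these lie in $L$, and combine Chebotarev with the dictionary between the Frobenius cycle type and the factorization of $f\circ g$ and $f_2$ modulo all but finitely many primes. The only real difference is where Chebotarev is applied. The paper views $\sigma$ inside $G=\mathrm{Gal}(f\circ g|K)$ (more precisely, inside the Galois group over $K$ of the compositum of the splitting field of $f\circ g$ with $L$) and applies Chebotarev over $K$ directly, so the Frobenius condition immediately controls the factorizations over $\mathcal{O}_K/p$ with no further bookkeeping. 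Your first route applies Chebotarev over $L$ and then pushes primes down to $K$; this does work, provided you intersect with the density-one set of primes $\mathfrak{P}$ of $\mathcal{O}_L$ of residue degree one over $\mathbb{Q}$, so that $\mathcal{O}_L/\mathfrak{P}=\mathcal{O}_K/p$ and complete splitting of $f_2$ modulo $\mathfrak{P}$ transfers to $p$ (for the no-root condition no such hypothesis is needed, since $\mathcal{O}_K/p\hookrightarrow\mathcal{O}_L/\mathfrak{P}$), and provided you note that the resulting primes of $\mathcal{O}_K$ still have positive density because $\mathfrak{P}\mapsto p$ is at most $[L:K]$-to-one and preserves the norm on degree-one primes. (Incidentally, ``$p$ splits completely in $L$'' is not equivalent to ``$p$ lies below a degree-one prime of $\mathcal{O}_L$''; the latter, weaker condition is what you need and what the residue-degree-one restriction delivers.)

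The detour through $\mathbb{Q}$ and $\hat L$ in your second paragraph, however, is both unnecessary and unjustified as written. Neither $L|\mathbb{Q}$ nor the splitting field $N|\mathbb{Q}$ is assumed Galois, so $\mathrm{Gal}(N|L)\cong S_k^d$ need not be a normal subgroup of a Galois group $\mathrm{Gal}(N|\mathbb{Q})$ with quotient $\mathrm{Gal}(L|\mathbb{Q})$; and even after passing to Galois closures, the elements restricting to the identity on $\hat L$ form the subgroup $\mathrm{Gal}(N\hat L|\hat L)$, which is in general a proper (normal) subgroup of $S_k^d$ — nothing in the hypotheses prevents it from being trivial (e.g.\ if $N\subseteq \hat L$) — so ``any element of the coset corresponding to $\mathrm{id}_{\hat L}$ mapping into the desired subset'' need not exist. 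Insisting that the Frobenius be trivial on $\hat L$ is also far more than required: it suffices that it fix the roots of $f_2$, which your $\sigma\in\mathrm{Gal}(N|L)$ already does. Dropping that second construction and keeping the first, with the residue-degree-one refinement made explicit, yields a complete proof that is essentially the paper's, with Chebotarev applied over $L$ instead of over $K$.
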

\begin{proof}
Let $G:=Gal(f\circ g | K)$. 
Obviously, the normal subgroup $S_k^d=Gal(f\circ g|L)$ of $G$ contains an element $\sigma$ acting fixed point freely on the roots of $f\circ g$, namely every element which acts fixed point freely on each of the $d$ orbits of $S_k^d$. At the same time, $\sigma$ of course fixes all roots of $f_2$. By the Chebotarev density theorem, there is a positive density set of primes $p$ with Frobenius element conjugate to $\sigma$ in $G$.
But the cycle type of $\sigma$ in the action on the roots of $f\circ g$ corresponds to the splitting behavior of $f\circ g$ modulo $p$, while the cycle type in the action on the roots of $f_2$ corresponds to the splitting behavior of $f_2$.
Therefore, for all such $p$, $f_2$ splits completely modulo $p$, while $f\circ g$ does not have a root.
%acts imprimitively on the roots of $f\circ g$, with $d$ blocks of size $k$. Since $k>1$, there is an element $\sigma\in S_k \wr G$ stabilizing a block, but not a point (even the simultaneous stabilizer
%of all blocks, $S_k^r$, has fixed point free elements). By the Chebotarev density theorem, there is a positive density set of primes $p$ with Frobenius element conjugate to $\sigma$ in $S_k \wr G$. But the cycle type of $\sigma$
%corresponds to the splitting behaviour of $f\circ g$ modulo $p$, while the cycle type of its image in the action on the blocks corresponds to the splitting behaviour of $f$.
%Therefore, for all such $p$, $f$ has a root modulo $p$, while $f\circ g$ does not.
\end{proof}

%\section{Proof of the Main Theorem}
Now we are ready for the proof of Theorem \ref{almostall}. 
\begin{proof}
Let $S$ be the set of branch points of the extension $F|K(t)$. Firstly, by Lemma \ref{techn_lemma}, for almost all rational functions $g$ of a fixed degree, the ramification loci of $K(s)|K(t)$ and of $F|K(t)$ are disjoint (where $s$ is such that $g(s)=t$). This condition already forces $F$ and the Galois closure of $K(s)|K(t)$ to be linearly disjoint even over $\overline{K}(t)$, and is therefore in particular sufficient to ensure that $F(s)|K(s)$ is still $K$-regular with Galois group $G$.
Also, by Abhyankar's Lemma (e.g.\ \cite[Theorem 3.9.1]{St}), disjointness of the ramification loci implies that ramification indices in $F(s)|K(t)$ are the same as in $F|K(t)$ (for primes ramifying in $F|K(t)$) or as in $K(s)|K(t)$ (for primes ramifying in $K(s)|K(t)$).
The primes of $K(s)$ ramifying in $F(s)$ are then exactly the ones extending primes of $K(t)$ ramifying in $F$. After constant field extension from $K$ to $\overline{K}$ the ramified primes of $F|K(t)$ split exactly into the $t\mapsto t_i$, with $t_i\in S$. In the same way, the primes of $K(s)$ extending these primes split into the $s\mapsto s_i$ with $s_i\in g^{-1}(S)$. In other words, the branch point set of $F(s)|K(s)$ is just the preimage $g^{-1}(S)$.

Now let $f$ be the ramification polynomial of $F|K(t)$, multiplied by a suitable constant to make its coefficients integral. By definition, the ramification polynomial of $F(s)|K(s)$ then equals $f\circ \tilde{g}$ (up to possibly multiplicity of roots), where $\tilde{g}$ is the homogenization of (numerator and denominator of) $g$. For purposes of splitting, we may simply identify this with $f\circ g$.

Similarly, let $f_2$ be the ramification polynomial of $F_2|K(t)$, and let $L$ be the splitting field of $f\cdot f_2$ over $K$.
 We know from Lemma \ref{fullwr} that for almost all $g$, the Galois group of $f\circ g$ over $L$ is isomorphic to $(S_k)^d$ (where of course $k$ and $d$ are the degrees of $g$ and $f$). 

We know from Lemma \ref{modp} that there exist infinitely many primes such that $f\circ g$ has no root mod $p$ whereas $f_2$ splits completely. The assertion now follows immediately from Legrand's criterion (Prop.\ \ref{legrand_crit}).
 %Referenzieren.
\end{proof}

\section{A generalization: Specialization-equivalence of Galois extensions}
\label{spec_equiv}
Here we consider a notion of specialization-equivalence of Galois extensions, generalizing the problem of $G$-parametricity.\\
For any (not necessarily regular) Galois extension $F|K(t)$, let $\mathcal{S}_F$ be the set of all specializations $F_{t_0}|K$, where $F_{t_0}$ is obtained from $F$ by specializing $t\mapsto t_0\in K\cup\{\infty\}$. Call the extensions $F|K(t)$ and $F_2|K(t)$ {\textit specialization-equivalent}, if $\mathcal{S}_{F} = \mathcal{S}_{F_2}$.

The problem of specialization-equivalence is particularly interesting in the investigation of families of regular Galois extensions. It is a natural question to ask whether or not a member of such a family is uniquely characterized within the family by its set of specializations. One such family is the family of rational translates of a prescribed regular $G$-extension $F|K(t)$.\\
As a consequence of Theorem \ref{almostall}, specialization-equivalence is rare among rational translates. This is made precise in the following. To ease notation, for a non-constant rational function $g$ over $K$ we write $\mathcal{S}_{F,g}$ for the set of specializations of $F(s)|K(s)$, where $s$ is a root of $g(X)-t$. 
%
%
%%%%%
%NOTE: Same result (with same proof) remains true for ANY regular extension $L|K(t)$, compared to the
%degree-k translates of some given $F|K(t)$!!!
%
\begin{kor}
\label{unique_spez}
Let $F|K(t)$ and $F_2|K(t)$ be regular Galois extensions with group $G$, let $g_2$ be any non-constant rational function over $K$, and let $k\ge 2$. Then for almost all degree-$k$ rational functions $g$ over $K$ the following holds:\\
%REGULARITY ASSUMPTION NECESSARY!
The sets $\mathcal{S}_{F,g}$ and $\mathcal{S}_{F_2,g_2}$ differ by infinitely many elements. In particular, the translates of $F$ by a root field of $g(X)-t$ and of $F_2$ by a root field of $g_2(X)-t$ are not specialization-equivalent.
\end{kor}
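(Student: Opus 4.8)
The plan is to reduce Corollary \ref{unique_spez} directly to the Main Theorem \ref{almostall}, treating the two extensions $F_2|K(t)$ and $F(s)|K(s)$ there in two different roles. The key observation is that the translate $F_2(s_2)|K(s_2)$ (where $s_2$ is a root of $g_2(X)-t$) is itself a regular Galois extension with group $G$: indeed, by Lemma \ref{techn_lemma} applied once more, after possibly discarding a density-zero set of functions $g$ we may assume the branch point set of $F(s)|K(s)$ and the (fixed, finite) branch point set of $F_2(s_2)|K(s_2)$ are disjoint, and the regularity/Galois-group claim for $F_2(s_2)|K(s_2)$ follows exactly as in the proof of Theorem \ref{almostall} (disjointness of ramification loci forces linear disjointness over $\overline{K}(t)$, hence $K$-regularity and Galois group $G$). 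So $\mathcal{S}_{F_2,g_2}$ is the specialization set of a bona fide regular $G$-extension, and it suffices to produce infinitely many elements of $\mathcal{S}_{F_2,g_2}$ not lying in $\mathcal{S}_{F,g}$.

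Next I would invoke Theorem \ref{almostall} with the roles: take the ``$F_2$'' of that theorem to be our $F_2(s_2)|K(s_2)$, and take the ``$F$'' of that theorem to be our $F|K(t)$. The theorem then says that for almost all degree-$k$ rational functions $g$, the translate $F(s)|K(s)$ is a regular $G$-extension and there are infinitely many $G$-extensions of $K$ arising as specializations of $F_2(s_2)|K(s_2)$ but not of $F(s)|K(s)$; that is, $\mathcal{S}_{F_2,g_2}\setminus \mathcal{S}_{F,g}$ is infinite, which is exactly what we want. The only mild wrinkle is that Theorem \ref{almostall} is phrased with $F_2|K(t)$ being an extension of the rational function field $K(t)$ itself rather than $K(s_2)$; but $K(s_2)$ is again a rational function field (as $g_2(X)-t$ has a root field of genus zero — or, if one prefers to avoid that subtlety, one can note the whole argument of Theorem \ref{almostall} goes through verbatim with $K(t)$ replaced by any rational function field $K(s_2)$, since Legrand's criterion Prop.\ \ref{legrand_crit} and Lemmas \ref{fullwr}, \ref{modp} only use that the base is a rational function field over the number field $K$). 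Intersecting the two density-$1$ conditions (the one from Lemma \ref{techn_lemma} ensuring disjoint ramification, and the one from Theorem \ref{almostall}) still gives a density-$1$ set of $g$, completing the proof.

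I expect the only genuine point requiring care — the ``main obstacle,'' such as it is — to be the verification that a root field of $g_2(X)-t$ over $K(t)$ is again rational (equivalently, that applying Theorem \ref{almostall} over the base $K(s_2)$ is legitimate). Since $g_2$ is a fixed non-constant rational function, $X\mapsto g_2(X)$ realizes $K(t)=K(g_2(s_2))$ as a subfield of $K(s_2)$, so $K(s_2)$ is indeed purely transcendental over $K$ of transcendence degree one and the hypotheses of Theorem \ref{almostall} (number field $K$, regular $G$-extensions of a rational function field) are met. Everything else is a bookkeeping matter of unwinding the definitions of $\mathcal{S}_{F,g}$ and $\mathcal{S}_{F_2,g_2}$ and noting that ``differ by infinitely many elements'' is implied by — and here follows from — one-sided infiniteness of the symmetric difference. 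This yields the ``in particular'' statement on failure of specialization-equivalence immediately.
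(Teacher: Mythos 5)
There is a genuine gap, and it sits exactly at the point you declared to be settled. You assert that $F_2(s_2)|K(s_2)$ is itself a regular Galois extension with group $G$, justifying this via Lemma \ref{techn_lemma} and a density-zero exclusion on $g$. But whether $F_2(s_2)|K(s_2)$ is regular with group $G$ depends only on the \emph{fixed} data $F_2$ and $g_2$, and no condition on the varying function $g$ can influence it: the disjointness that Lemma \ref{techn_lemma} provides concerns the branch locus of the root field of $g(X)-t$ relative to a fixed finite set, which is relevant for $F(s)|K(s)$ but says nothing about the interaction of $K(s_2)|K(t)$ with $F_2|K(t)$. Since $g_2$ is an arbitrary non-constant rational function, the translate can genuinely degenerate: for instance $F_2=K(\sqrt{t})$, $g_2=X^2$ gives $F_2(s_2)=K(s_2)$ (trivial group), and $g_2=-X^2$ gives the constant extension $K(s_2,\sqrt{-1})$, which is not $K$-regular. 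In such cases your invocation of Theorem \ref{almostall} with $F_2(s_2)|K(s_2)$ in the role of ``$F_2$'' is illegitimate, and your argument provides nothing for them, even though the Corollary still claims a conclusion there.

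The paper closes exactly this hole with two explicit reduction steps before applying Theorem \ref{almostall}: if $\mathrm{Gal}(F_2(s_2)|K(s_2))$ is a proper subgroup of $G$, then $\mathcal{S}_{F_2,g_2}$ contains no $G$-extensions at all, while for almost all $g$ the extension $F(s)|K(s)$ is a regular $G$-extension and hence (Hilbert) contributes infinitely many $G$-extensions to $\mathcal{S}_{F,g}$; and if $F_2(s_2)|K(s_2)$ has group $G$ but is not $K$-regular, then every member of $\mathcal{S}_{F_2,g_2}$ contains one and the same non-trivial constant subextension, whereas $\mathcal{S}_{F,g}$ contains infinitely many pairwise linearly disjoint specializations. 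Only after these reductions does the deduction from Theorem \ref{almostall} (with $K(s_2)$ re-identified with a rational function field $K(t)$, a point you do treat correctly) go through. Your remaining steps --- applying the Main Theorem with the pair $(F,\,F_2(s_2))$ and noting that one-sided infiniteness of $\mathcal{S}_{F_2,g_2}\setminus\mathcal{S}_{F,g}$ suffices --- coincide with the paper's argument; you need only replace the false ``automatic regularity'' claim by the two degenerate-case arguments above.
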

\begin{proof}
Let $s_2$ be a root of $g_2(X)-t$.
We may assume that $F_2(s_2)|K(s_2)$ still has Galois group $G$; otherwise $\mathcal{S}_{F_2,g_2}$ does not contain any $G$-extensions, whereas almost all degree-$k$ rational functions $g$ lead to extensions still with Galois group $G$ (see the proof of the Main Theorem) and therefore to infinitely many specializations with group $G$.
With a similar argument, we can assume that $F_2(s_2)|K(s_2)$ is regular (otherwise all its specializations would contain the same non-trivial subextension, whereas almost all $g$ lead to regular extensions, which then possess infinitely many linearly disjoint specializations). 

Now the assertion follows immediately from Theorem \ref{almostall}.
\end{proof}
\begin{remark}
As pointed out by the referee, the example of Remark \ref{counterexample}, yielding specialization-equivalent members in a family of rational translates, is due to the fact that there, the extension $K(\sqrt{t})|K(t)$ and its translate by $K(s)$ are isomorphic. It is natural to ask whether every such example arises in this way. In fact, this is not the case: One can construct (however, by means exceeding the scope of this paper) examples of regular Galois extensions $F|K(t)$ and translates $F(s)|K(s)$ such that $F|K(t)$ is itself isomorphic to a rational translate of $F(s)|K(s)$, but $F|K(t)$ and $F(s)|K(s)$ are {\textit not} isomorphic. The classification of such exceptional cases seems to be an interesting subject for further research.
\end{remark}

\section{Concluding remarks}
While we have shown that for any given group $G$ and any number field $K$, non-$G$-parametric extensions are abundant (under the trivially necessary condition that there are regular $G$-extensions over $K$ at all), it is still left open whether there are groups $G$ such that no $G$-parametric extension exists over any number field $K$. The first examples of such $G$ are given in joint work by the author and F.\ Legrand (\cite{KL}). It should however be emphasized that the methods used there can never yield results for all finite groups $G$ (in particularly not for simple groups).

{\textbf Acknowledgment}\\
I thank the referee for several helpful suggestions.\\
This work was partially supported by the Israel Science Foundation (grant no. 577/15).

\end{document}